\providecommand{\U}[1]{\protect\rule{.1in}{.1in}}
\theoremstyle{plain}
\newtheorem{proposition}{Proposition}
\theoremstyle{remark}
\newtheorem{remark}{Remark}
\begin{document}
\title[Dual instability measures]{Dual instability measures\ of a subspace of $P^{n}(K)$ under a subgroup of
$\operatorname{Aut}(K)$}
\author{Jun-ichi Matsushita}
\address{Tokyo, Japan}
\email{matsu-j2@outlook.jp}
\date{}
\subjclass[2020]{}
\keywords{Pl\"{u}cker coordinates, invariant field, dual irrationality measures.}

\begin{abstract}
Let $K$ be a commutative field and let $V$ be a subspace of $P^{n}(K)$. Let
$\Gamma$ be a subgroup of $\operatorname{Aut}(K)$ and let $\Gamma$ act on
$P^{n}(K)$ by $\sigma((x_{i})_{0\leq i\leq n})=(\sigma(x_{i}))_{0\leq i\leq
n}$ for $\sigma\in\Gamma$ and $(x_{i})_{0\leq i\leq n}\in P^{n}(K)$. In this
paper, we ask `how much'\ unstable $V$ is under $\Gamma$ by asking how much
higher (or lower) dimension the join (or the meet) of $\sigma(V)$ ($\sigma
\in\Gamma$) has than $V$, and answer it in terms of the Pl\"{u}cker
coordinates of $V$ and the invariant field $k$ of $\Gamma$, through presenting
dual `irrationality' measures of $V$ over $k$.

\end{abstract}
\maketitle

\section{\label{intr}Introduction}

Let $K$ be a commutative field and let $V$ be a subspace of $P^{n}(K)$ ($=$
the standard projective space of dimension $n$ over $K$). Let $\Gamma$ be a
subgroup of $\operatorname{Aut}(K)$ ($=$ the automorphism group of $K$) and
let $\Gamma$ act on $P^{n}(K)$ by
\begin{equation}
\sigma((x_{i})_{0\leq i\leq n})=(\sigma(x_{i}))_{0\leq i\leq n} \label{action}%
\end{equation}
for all $\sigma\in\Gamma$ and $(x_{i})_{0\leq i\leq n}\in P^{n}(K)$. Then
$\sigma(V)$ ($\sigma\in\Gamma$) are subspaces of $P^{n}(K)$ of the same
dimension as $V$, and, since $V$ is stable under $\Gamma$ if and only if the
join (or the meet) of $\sigma(V)$ ($\sigma\in\Gamma$)\ coincides with $V$, and
hence if and only if the join (or the meet) of $\sigma(V)$ ($\sigma\in\Gamma
$)\ has the same dimension as $V$, it is natural to ask `how much'\ unstable
$V$ is under $\Gamma$ by asking the following (i) or (ii):
\begin{align*}
&  \text{(i) How much higher dimension does the join of }\sigma(V)\text{
(}\sigma\in\Gamma\text{) have than }V\text{?}\\
&  \text{(ii) How much lower dimension does the meet of }\sigma(V)\text{
(}\sigma\in\Gamma\text{) have than }V\text{?}%
\end{align*}

In this paper, we answer these questions in terms of the Pl\"{u}cker
coordinates of $V$ and the invariant field of $\Gamma$, which is hereafter
denoted by $k$. For each $m$-dimensional subspace $X$ of $P^{n}(K)$, let
$\left(  \ldots,X_{j_{0}\cdots j_{m}},\ldots\right)  $\ denote the Pl\"{u}cker
coordinates of $X$ and define the $k$-\emph{irrationality degree}
$\operatorname{Irr}_{k}$ and the $k$-\emph{irrationality codegree}
$\operatorname{Irr}_{k}^{\ast}$ of $X$ by taking a permutation $j_{0}\cdots
j_{n}$ of $0\cdots n$ such that $X_{j_{0}\cdots j_{m}}\neq0$ and setting
\begin{align}
\operatorname{Irr}_{k}X  &  =\dim_{k}\left[  X\right]  _{j_{0}\cdots j_{n}%
}-\dim_{k}\left(  \left[  X\right]  _{j_{0}\cdots j_{n}}\cap k^{m+1}\right)
\text{,}\label{irrq}\\
\operatorname{Irr}_{k}^{\ast}X  &  =\dim_{k}\left[  X\right]  _{j_{0}\cdots
j_{n}}^{\ast}-\dim_{k}\left(  \left[  X\right]  _{j_{0}\cdots j_{n}}^{\ast
}\cap k^{n-m}\right)  \text{,} \label{irrqstar}%
\end{align}
where $\left[  X\right]  _{j_{0}\cdots j_{n}}$ denotes the linear span over
$k$ of the subset
\begin{equation}
\left\{  \left(  \frac{X_{j_{0}\cdots j_{s-1}j_{t}j_{s+1}\cdots j_{m}}%
}{X_{j_{0}\cdots j_{m}}}\right)  _{0\leq s\leq m}:m+1\leq t\leq n\right\}
\label{xjn}%
\end{equation}
of $K^{m+1}$, and $\left[  X\right]  _{j_{0}\cdots j_{n}}^{\ast}$ denotes the
linear span over $k$ of the subset
\[
\left\{  \left(  \frac{X_{j_{0}\cdots j_{t-1}j_{s}j_{t+1}\cdots j_{m}}%
}{X_{j_{0}\cdots j_{m}}}\right)  _{m+1\leq s\leq n}:0\leq t\leq m\right\}
\]
of $K^{n-m}$. Then our purpose is to show that $\operatorname{Irr}_{k}$ and
$\operatorname{Irr}_{k}^{\ast}$ are well-defined (that is, for every subspace
$X$ of $P^{n}(K)$, $\operatorname{Irr}_{k}X$ and $\operatorname{Irr}_{k}%
^{\ast}X$ are independent of the choice of $j_{0}\cdots j_{n}$) and that
$\operatorname{Irr}_{k}V$\ and $\operatorname{Irr}_{k}^{\ast}V$\ are the
answers to (i) and (ii), respectively, that is,
\begin{align}
\dim\bigvee_{\sigma\in\Gamma}\sigma(V)  &  =\dim V+\operatorname{Irr}%
_{k}V\text{,}\label{equality1}\\
\dim\bigcap_{\sigma\in\Gamma}\sigma(V)  &  =\dim V-\operatorname{Irr}%
_{k}^{\ast}V\text{,} \label{equality2}%
\end{align}
where $\bigvee$ denotes the join operation and $\bigcap$ denotes the meet operation.

In the following sections, we actually prove that $\operatorname{Irr}_{k}$ is
well-defined and (\ref{equality1}) holds and that $\operatorname{Irr}%
_{k}^{\ast}$ is well-defined and (\ref{equality2}) holds, revealing the
`duality' of these two.

\section{\label{duality}Duality of (\ref{equality1}) and (\ref{equality2})}

For each subspace $X$ of $P^{n}(K)$, let $X^{\perp}$ denote
\[
\left\{  (x_{i})_{0\leq i\leq n}\in P^{n}(K):\forall(a_{i})_{0\leq i\leq n}\in
X\text{,\ }\sum_{i=0}^{n}a_{i}x_{i}=0\right\}  \text{,}%
\]
which is a subspace of $P^{n}(K)$ of dimension $n-1-\dim X$ that is hereafter
called the \emph{dual} of $X$---though informally. Then, since the join of
subspaces of $P^{n}(K)$ is the dual of the meet of their duals, we have
\[
\bigvee_{\sigma\in\Gamma}\sigma(V)=\left(  \bigcap_{\sigma\in\Gamma}%
\sigma(V)^{\perp}\right)  ^{\perp}\text{,}%
\]
which is equivalent to
\begin{equation}
\bigvee_{\sigma\in\Gamma}\sigma(V)=\left(  \bigcap_{\sigma\in\Gamma}%
\sigma\left(  V^{\perp}\right)  \right)  ^{\perp} \label{dual}%
\end{equation}
because every $\sigma\in\Gamma$ satisfies
\begin{align*}
\sigma(V)^{\perp}  &  =\left\{  (x_{i})_{0\leq i\leq n}\in P^{n}%
(K):\forall(a_{i})_{0\leq i\leq n}\in\sigma(V)\text{,\ }\sum_{i=0}^{n}%
a_{i}x_{i}=0\right\} \\
&  =\left\{  (x_{i})_{0\leq i\leq n}\in P^{n}(K):\forall(a_{i})_{0\leq i\leq
n}\in V\text{,\ }\sum_{i=0}^{n}\sigma(a_{i})x_{i}=0\right\} \\
&  =\left\{  (x_{i})_{0\leq i\leq n}\in P^{n}(K):\forall(a_{i})_{0\leq i\leq
n}\in V\text{,\ }\sigma\left(  \sum_{i=0}^{n}a_{i}\sigma^{-1}(x_{i})\right)
=0\right\} \\
&  =\left\{  (x_{i})_{0\leq i\leq n}\in P^{n}(K):\forall(a_{i})_{0\leq i\leq
n}\in V\text{,\ }\sum_{i=0}^{n}a_{i}\sigma^{-1}(x_{i})=0\right\} \\
&  =\sigma\left(  V^{\perp}\right)  \text{.}%
\end{align*}
Also we have

\begin{proposition}
\label{irrirrdu}For every $m$-dimensional subspace $X$ of $P^{n}(K)$, for
every permutation $j_{0}\cdots j_{n}$ of $0\cdots n$ such that $X_{j_{0}\cdots
j_{m}}\neq0$, the right-hand side of \emph{(\ref{irrq})} is equal to the
expression obtained by replacing $X$ by $X^{\perp}$, $j_{0}\cdots j_{n}$ by
$j_{m+1}\cdots j_{n}j_{0}\cdots j_{m}$ and $n-m$ by $n-\dim X^{\perp
}=n-(n-1-m)=m+1$ in the right-hand side of \emph{(\ref{irrqstar})}, that is,
to
\[
\dim_{k}\left[  X^{\perp}\right]  _{j_{m+1}\cdots j_{n}j_{0}\cdots j_{m}%
}^{\ast}-\dim_{k}\left(  \left[  X^{\perp}\right]  _{j_{m+1}\cdots j_{n}%
j_{0}\cdots j_{m}}^{\ast}\cap k^{m+1}\right)  \text{,}%
\]
where $\left[  X^{\perp}\right]  _{j_{m+1}\cdots j_{n}j_{0}\cdots j_{m}}%
^{\ast}$ denotes the linear span over $k$ of the subset
\begin{equation}
\left\{  \left(  \frac{X_{j_{m+1}\cdots j_{t-1}j_{s}j_{t+1}\cdots j_{n}%
}^{\perp}}{X_{j_{m+1}\cdots j_{n}}^{\perp}}\right)  _{0\leq s\leq m}:m+1\leq
t\leq n\right\}  \label{xdualh}%
\end{equation}
of $K^{m+1}$.
\end{proposition}

\begin{proof}
By the well-known relation between the Pl\"{u}cker coordinates and the dual
Pl\"{u}cker coordinates of a space \cite[Chapter VII, \S \ 3, Theorem
I]{HodgePedoe}, letting $\epsilon$ and $\delta$ be the Levi-Civita symbol and
the generalized Kronecker delta symbol, respectively, we have
\begin{align*}
\frac{X_{j_{0}\cdots j_{s-1}j_{t}j_{s+1}\cdots j_{m}}}{X_{j_{0}\cdots j_{m}}}
&  =\frac{\epsilon_{j_{0}\cdots j_{s-1}j_{t}j_{s+1}\cdots j_{t-1}j_{s}%
j_{t+1}\cdots j_{n}}X_{j_{m+1}\cdots j_{t-1}j_{s}j_{t+1}\cdots j_{n}}^{\perp}%
}{\epsilon_{j_{0}\cdots j_{n}}X_{j_{m+1}\cdots j_{n}}^{\perp}}\\
&  =\delta_{j_{0}\cdots j_{s-1}j_{t}j_{s+1}\cdots j_{t-1}j_{s}j_{t+1}\cdots
j_{n}}^{j_{0}\cdots j_{n}}\frac{X_{j_{m+1}\cdots j_{t-1}j_{s}j_{t+1}\cdots
j_{n}}^{\perp}}{X_{j_{m+1}\cdots j_{n}}^{\perp}}\\
&  =-\frac{X_{j_{m+1}\cdots j_{t-1}j_{s}j_{t+1}\cdots j_{n}}^{\perp}%
}{X_{j_{m+1}\cdots j_{n}}^{\perp}}%
\end{align*}
for every $s$ with $0\leq s\leq m$ and every $t$ with $m+1\leq t\leq n$.
Therefore (\ref{xjn}) is equal to the subset
\[
\left\{  -\left(  \frac{X_{j_{m+1}\cdots j_{t-1}j_{s}j_{t+1}\cdots j_{n}%
}^{\perp}}{X_{j_{m+1}\cdots j_{n}}^{\perp}}\right)  _{0\leq s\leq m}:m+1\leq
t\leq n\right\}
\]
of $K^{m+1}$, that is, to the set of additive inverses of the elements of
(\ref{xdualh}), which implies
\[
\left[  X\right]  _{j_{0}\cdots j_{n}}=\left[  X^{\perp}\right]
_{j_{m+1}\cdots j_{n}j_{0}\cdots j_{m}}^{\ast}%
\]
and hence the desired equality.
\end{proof}

Proposition \ref{irrirrdu} is easily seen to imply that if one of
$\operatorname{Irr}_{k}$ and $\operatorname{Irr}_{k}^{\ast}$ is well-defined,
then the other is also well-defined and
\begin{equation}
\operatorname{Irr}_{k}V=\operatorname{Irr}_{k}^{\ast}V^{\perp}
\label{irrirrdual}%
\end{equation}
holds; (\ref{dual}) and (\ref{irrirrdual}) imply that (\ref{equality1}) is
equivalent to
\[
\dim\left(  \bigcap_{\sigma\in\Gamma}\sigma\left(  V^{\perp}\right)  \right)
^{\perp}=\dim V+\operatorname{Irr}_{k}^{\ast}V^{\perp}%
\]
and hence to
\[
\dim\bigcap_{\sigma\in\Gamma}\sigma\left(  V^{\perp}\right)  =\dim V^{\perp
}-\operatorname{Irr}_{k}^{\ast}V^{\perp}\text{,}%
\]
that is, to the equality obtained by replacing $V$ by $V^{\perp}$ in
(\ref{equality2}). Therefore, to prove that $\operatorname{Irr}_{k}$ is
well-defined and (\ref{equality1}) holds and that $\operatorname{Irr}%
_{k}^{\ast}$ is well-defined and (\ref{equality2}) holds, it is enough to
prove one of these two, say, the latter, which we prove in the next section.

\section{\label{prth}Proof of (\ref{equality2})}

Hereafter a subspace of $P^{n}(K)$ or $K^{n+1}$ is said to be $k$%
-\emph{rational }if it is spanned by a subset of $P^{n}(k)$ or $k^{n+1}$,
respectively, where (and hereafter) $P^{n}(k)$ denotes the image of
$k^{n+1}-\{\mathbf{0}\}$ by the canonical surjection $K^{n+1}-\{\boldsymbol{0}%
\}\rightarrow P^{n}(K)$. Now let $\Gamma$ act on $K^{n+1}$ by (\ref{action})
for all $\sigma\in\Gamma$ and $(x_{i})_{0\leq i\leq n}\in K^{n+1}$. Then, as
is seen---though more or less indirectly---from \cite[Chapter II, \S \ 8, no.
7, Theorem 1 (i)]{Bourbaki1} or its specialization \cite[Chapter V, \S \ 10,
no. 4, Proposition 6 \emph{a})]{Bourbaki2}, it holds that
\begin{align*}
&  \text{a subspace of }K^{n+1}\text{ is }k\text{-rational if and only if }\\
&  \text{it is stable under the action of }\Gamma\text{ on }K^{n+1}\text{,}%
\end{align*}
which is easily shown to imply that
\begin{align*}
&  \text{a subspace of }P^{n}(K)\text{ is }k\text{-rational if and only if }\\
&  \text{it is stable under the action of }\Gamma\text{ on }P^{n}(K)\text{,}%
\end{align*}
which implies that
\begin{equation}
\bigcap_{\sigma\in\Gamma}\sigma(V)\text{ is }k\text{-rational}
\label{krational}%
\end{equation}
because every $\tau\in\Gamma$ satisfies $\tau\Gamma=\Gamma$ and hence
\[
\tau\left(  \bigcap_{\sigma\in\Gamma}\sigma(V)\right)  =\bigcap_{\sigma
\in\Gamma}\tau\left(  \sigma(V)\right)  =\bigcap_{\sigma\in\Gamma}\tau
\sigma(V)=\bigcap_{\sigma\in\tau\Gamma}\sigma(V)=\bigcap_{\sigma\in\Gamma
}\sigma(V)\text{.}%
\]

For each subspace $X$ of $P^{n}(K)$, let $\widetilde{X}$ denote the span of
$X\cap P^{n}(k)$ in $P^{n}(K)$, which is the largest $k$-rational subspace of
$P^{n}(K)$ contained in $X$. Then, since a subspace of $P^{n}(K)$ is
$k$-rational if and only if it is the largest $k$-rational subspace of
$P^{n}(K)$ contained in itself, (\ref{krational}) is equivalent to
\[
\bigcap_{\sigma\in\Gamma}\sigma(V)=\widetilde{\bigcap_{\sigma\in\Gamma}%
\sigma(V)}\text{,}%
\]
which is equivalent to
\begin{equation}
\bigcap_{\sigma\in\Gamma}\sigma(V)=\widetilde{V} \label{vchilda}%
\end{equation}
because
\begin{align*}
\left(  \bigcap_{\sigma\in\Gamma}\sigma(V)\right)  \cap P^{n}(k)  &
=\bigcap_{\sigma\in\Gamma}\left(  \sigma(V)\cap P^{n}(k)\right)
=\bigcap_{\sigma\in\Gamma}\left(  \sigma(V)\cap\sigma\left(  P^{n}(k)\right)
\right) \\
&  =\bigcap_{\sigma\in\Gamma}\sigma\left(  V\cap P^{n}(k)\right)
=\bigcap_{\sigma\in\Gamma}\left(  V\cap P^{n}(k)\right)  =V\cap P^{n}%
(k)\text{.}%
\end{align*}
Also we have

\begin{proposition}
\label{dimvpai}For every $m$-dimensional subspace $X$ of $P^{n}(K)$, for every
permutation $j_{0}\cdots j_{n}$ of $0\cdots n$ such that $X_{j_{0}\cdots
j_{m}}\neq0$, the right-hand side of \emph{(\ref{irrqstar})} is equal to
$m-\dim\widetilde{X}$.
\end{proposition}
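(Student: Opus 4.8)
The plan is to reduce the statement to a concrete computation with a row-echelon representative of $X$. First I would remove the permutation from the picture: permuting the coordinates of $K^{n+1}$ is a $k$-rational linear automorphism of $P^{n}(K)$, so it preserves $k$-rationality and dimensions, carries $\widetilde{X}$ to $\widetilde{X'}$ for the transformed subspace $X'$, and merely relabels the Pl\"{u}cker coordinates of $X$; hence it suffices to treat the case $j_{0}\cdots j_{n}=0\cdots n$, i.e. $X_{0\cdots m}\neq0$. Lifting $X$ to the $(m+1)$-dimensional subspace $W\subseteq K^{n+1}$, the condition $X_{0\cdots m}\neq0$ means that the projection of $K^{n+1}$ onto its first $m+1$ coordinates restricts to a $K$-isomorphism $W\cong K^{m+1}$, so $W$ has a unique basis whose matrix has the block form $[\,I_{m+1}\mid A\,]$ with $A$ an $(m+1)\times(n-m)$ matrix over $K$; for this basis $X_{0\cdots m}=1$.

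Next I would evaluate the vectors defining $[X]^{\ast}_{0\cdots n}$ in this normal form. For $0\leqslant t\leqslant m$ and $m+1\leqslant s\leqslant n$, the Pl\"{u}cker coordinate $X_{0\cdots(t-1)\,s\,(t+1)\cdots m}$ is the determinant of the matrix obtained from $I_{m+1}$ by replacing its $t$-th column with the column of $A$ indexed by $s$; a cofactor expansion along that column identifies this determinant with the $(t,s)$-entry of $A$. Dividing by $X_{0\cdots m}=1$, the $t$-th vector in the defining family of $[X]^{\ast}_{0\cdots n}$ is exactly the $t$-th row of $A$. Hence, writing $R:=[X]^{\ast}_{0\cdots n}$, the space $R$ is the $k$-linear span in $K^{n-m}$ of the rows of $A$, and the right-hand side of \textup{(\ref{irrqstar})} equals $\dim_{k}R-\dim_{k}(R\cap k^{n-m})$.

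It then remains to show that $m-\dim_{K}\widetilde{X}$ equals this same number. The subspace $\widetilde{X}$ lifts to the $K$-span of $W\cap k^{n+1}$, and the $k$-dimension of a $k$-subspace of $k^{n+1}$ coincides with the $K$-dimension of its $K$-span in $K^{n+1}$ (a $k$-basis remains $K$-linearly independent); therefore $m-\dim_{K}\widetilde{X}=(m+1)-\dim_{k}(W\cap k^{n+1})=\dim_{K}W-\dim_{k}(W\cap k^{n+1})$. In the normal form, $W\cap k^{n+1}=\{(c,cA):c\in k^{m+1},\ cA\in k^{n-m}\}$, and projection onto the first $m+1$ coordinates is a $k$-isomorphism of this space onto $\phi^{-1}(R\cap k^{n-m})$, where $\phi\colon k^{m+1}\to R$ is the surjection $c\mapsto cA$. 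Rank--nullity gives $\dim_{k}\phi^{-1}(R\cap k^{n-m})=\bigl((m+1)-\dim_{k}R\bigr)+\dim_{k}(R\cap k^{n-m})$; substituting yields $m-\dim_{K}\widetilde{X}=\dim_{k}R-\dim_{k}(R\cap k^{n-m})$, which is the right-hand side of \textup{(\ref{irrqstar})}.

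The only genuinely delicate step is the middle one: keeping the normal-form bookkeeping honest, i.e. checking that the particular minors appearing in \textup{(\ref{irrqstar})} reassemble into the rows of $A$ with no spurious signs or index transpositions. Once that is pinned down, the conclusion is pure rank--nullity. (One should also adopt the convention that the empty projective subspace has dimension $-1$, so that the identity remains correct in the boundary case $W\cap k^{n+1}=\{0\}$.)
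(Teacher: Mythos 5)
Your proof is correct, and it pivots on the same two facts as the paper---the identity $\dim_{k}(W\cap k^{n+1})=\dim_{K}\widetilde{X}+1$ and a rank--nullity count---but it gets there by a genuinely different route. The paper works with the \emph{equations} of $X$: it defines the $k$-linear map $\mu\colon k^{n+1}\rightarrow K^{n-m}$ whose components are the standard linear forms cutting out $X$, quoting Hodge--Pedoe for the fact that their coefficients are exactly the Pl\"{u}cker ratios appearing in (\ref{irrqstar}); it then identifies $\operatorname{Ker}\mu$ with $X^{\prime}\cap k^{n+1}$ and $\operatorname{Im}\mu$ with $k^{n-m}+\left[X\right]_{j_{0}\cdots j_{n}}^{\ast}$, and finishes with rank--nullity plus the dimension formula for a sum of subspaces. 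You instead work with a \emph{parametrization}: after reducing to $j_{0}\cdots j_{n}=0\cdots n$, you normalize a basis of the lift to $[\,I_{m+1}\mid A\,]$, check by cofactor expansion that the defining vectors of $\left[X\right]_{0\cdots n}^{\ast}$ are precisely the rows of $A$ (there are indeed no signs to worry about here, unlike in the dual identity of Proposition \ref{irrirrdu}), and apply rank--nullity to $c\mapsto cA$ on $k^{m+1}$. The two computations are mirror images of one another ($x\mapsto x^{\prime\prime}-x^{\prime}A$ on $k^{n+1}$ versus $c\mapsto cA$ on $k^{m+1}$) and of course agree. What your version buys is self-containedness---the Pl\"{u}cker-coordinate input is derived on the spot rather than cited---at the cost of the preliminary permutation reduction (harmless, since a coordinate permutation is $k$-rational and only relabels Pl\"{u}cker coordinates) and of tying the argument to one normalized basis, whereas the paper's map $\mu$ handles an arbitrary admissible $j_{0}\cdots j_{n}$ directly. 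Your closing remark that one must take $\dim_{K}\emptyset=-1$ is the right convention; it is exactly what makes (\ref{xdashk}) correct in the paper when $X^{\prime}\cap k^{n+1}=\{0\}$.
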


\begin{proof}
Let $\mu$ be the $k$-linear map $k^{n+1}\rightarrow K^{n-m}$\ defined by
\begin{align*}
\mu\left(  (x_{i})_{0\leq i\leq n}\right)   &  =\left(  x_{j_{s}}-\sum
_{t=0}^{m}x_{j_{t}}\frac{X_{j_{0}\cdots j_{t-1}j_{s}j_{t+1}\cdots j_{m}}%
}{X_{j_{0}\cdots j_{m}}}\right)  _{m+1\leq s\leq n}\\
&  =\left(  x_{j_{s}}\right)  _{m+1\leq s\leq n}-\sum_{t=0}^{m}x_{j_{t}%
}\left(  \frac{X_{j_{0}\cdots j_{t-1}j_{s}j_{t+1}\cdots j_{m}}}{X_{j_{0}\cdots
j_{m}}}\right)  _{m+1\leq s\leq n}\text{.}%
\end{align*}
Then we have
\[
\operatorname{Im}\mu=k^{n-m}+\left[  X\right]  _{j_{0}\cdots j_{n}}^{\ast}%
\]
and hence
\begin{align*}
\dim_{k}\operatorname{Ker}\mu &  =n+1-\dim_{k}\operatorname{Im}\mu\\
&  =n+1-\dim_{k}\left(  k^{n-m}+\left[  X\right]  _{j_{0}\cdots j_{n}}^{\ast
}\right) \\
&  =n+1-\left\{  n-m+\dim_{k}\left[  X\right]  _{j_{0}\cdots j_{n}}^{\ast
}-\dim_{k}\left(  k^{n-m}\cap\left[  X\right]  _{j_{0}\cdots j_{n}}^{\ast
}\right)  \right\} \\
&  =m+1-\left\{  \dim_{k}\left[  X\right]  _{j_{0}\cdots j_{n}}^{\ast}%
-\dim_{k}\left(  \left[  X\right]  _{j_{0}\cdots j_{n}}^{\ast}\cap
k^{n-m}\right)  \right\}  \text{.}%
\end{align*}
Let $X^{\prime}$ and $\widetilde{X}^{\prime}$ denote the subspaces of
$K^{n+1}$ such that $X^{\prime}-\{\boldsymbol{0}\}$ and $\widetilde{X}%
^{\prime}-\{\boldsymbol{0}\}$ are mapped by the canonical surjection
$K^{n+1}-\{\boldsymbol{0}\}\rightarrow P^{n}(K)$ onto $X$ and $\widetilde{X}$,
respectively. Then we can easily show that $\widetilde{X}^{\prime}$ is the
span of $X^{\prime}\cap k^{n+1}$ in $K^{n+1}$ and hence
\[
\dim_{k}\left(  X^{\prime}\cap k^{n+1}\right)  =\dim\widetilde{X}^{\prime
}=\dim\widetilde{X}+1=m+1-\left(  m-\dim\widetilde{X}\right)  \text{.}%
\]
Therefore, for the above $\mu$ and $X^{\prime}$, the desired
\[
\dim_{k}\left[  X\right]  _{j_{0}\cdots j_{n}}^{\ast}-\dim_{k}\left(  \left[
X\right]  _{j_{0}\cdots j_{n}}^{\ast}\cap k^{n-m}\right)  =m-\dim\widetilde{X}%
\]
is equivalent to
\[
\dim_{k}\operatorname{Ker}\mu=\dim_{k}\left(  X^{\prime}\cap k^{n+1}\right)
\]
and hence is implied by
\[
\operatorname{Ker}\mu=X^{\prime}\cap k^{n+1}\text{,}%
\]
which certainly holds since
\[
x_{j_{s}}-\sum_{t=0}^{m}x_{j_{t}}\frac{X_{j_{0}\cdots j_{t-1}j_{s}%
j_{t+1}\cdots j_{m}}}{X_{j_{0}\cdots j_{m}}}=0\text{ \ (}s=m+1,\ldots
,n\text{)}%
\]
is, as is virtually proved in \cite[Chapter VII, \S \ 2, the third and fourth
paragraphs]{HodgePedoe}, a necessary and sufficient condition for
$(x_{i})_{0\leq i\leq n}\in P^{n}(K)$ to be in $X$, and hence for
$(x_{i})_{0\leq i\leq n}\in K^{n+1}$ to be in $X^{\prime}$.
\end{proof}

Proposition \ref{dimvpai} is immediately seen to imply that
$\operatorname{Irr}_{k}^{\ast}$ is well-defined and
\begin{equation}
\dim\widetilde{V}=\dim V-\operatorname{Irr}_{k}^{\ast}V \label{vchildadim}%
\end{equation}
holds; (\ref{vchilda}) and (\ref{vchildadim}) imply (\ref{equality2}).
Therefore we have the desired result.

\begin{remark}
For each subspace $X$ of $P^{n}(K)$, let $\overline{X}$ denote $\widetilde
{X^{\perp}}^{\perp}$, which is the dual of the largest $k$-rational subspace
of $P^{n}(K)$ contained in $X^{\perp}$, that is, is the smallest $k$-rational
subspace of $P^{n}(K)$ containing $X$. Then we have, dually to (\ref{vchilda})
and (\ref{vchildadim}),
\begin{equation}
\bigvee_{\sigma\in\Gamma}\sigma(V)=\overline{V}\text{ and }\dim\overline
{V}=\dim V+\operatorname{Irr}_{k}V\text{,} \label{joinver}%
\end{equation}
which, by (\ref{dual}) and (\ref{irrirrdual}), are respectively equivalent to
\[
\left(  \bigcap_{\sigma\in\Gamma}\sigma\left(  V^{\perp}\right)  \right)
^{\perp}=\widetilde{V^{\perp}}^{\perp}\text{ and }\dim\widetilde{V^{\perp}%
}^{\perp}=\dim V+\operatorname{Irr}_{k}^{\ast}V^{\perp}%
\]
and hence to
\[
\bigcap_{\sigma\in\Gamma}\sigma\left(  V^{\perp}\right)  =\widetilde{V^{\perp
}}\text{ and }\dim\widetilde{V^{\perp}}=\dim V^{\perp}-\operatorname{Irr}%
_{k}^{\ast}V^{\perp}\text{,}%
\]
that is, to the equalities obtained by replacing $V$ by $V^{\perp}$ in
(\ref{vchilda}) and (\ref{vchildadim}).
\end{remark}

\begin{remark}
Since $V$ is $k$-rational if and only if $\widetilde{V}$ (or $\overline{V}$)
coincides with $V$, and hence if and only if $\widetilde{V}$ (or $\overline
{V}$) has the same dimension as $V$, (\ref{vchildadim}) and the latter of
(\ref{joinver}) make it natural to regard $\operatorname{Irr}_{k}^{\ast}V$ and
$\operatorname{Irr}_{k}V$ as `$k$-irrationality' measures of $V$.
\end{remark}

\begin{remark}
We can easily see that every $\sigma\in\Gamma$ satisfies $\widetilde
{\sigma(V)}=\widetilde{V}$ and $\overline{\sigma(V)}=\overline{V}$ as well as
$\dim\sigma(V)=\dim V$, which, by (\ref{vchildadim}) and the latter of
(\ref{joinver}), implies that $\operatorname{Irr}_{k}^{\ast}V$ and
$\operatorname{Irr}_{k}V$ are invariants of $V$\ under $\Gamma$.
\end{remark}

\begin{remark}
The part `every $\tau\in\Gamma$ satisfies \ldots' of the third sentence of
this section, which is valid only under our assumption that $\Gamma$\emph{ is
a subgroup of }$\operatorname{Aut}(K)$, can be replaced by `every $\tau
\in\Gamma$ satisfies $\tau\Gamma\subseteq\Gamma$ and hence
\[
\tau\left(  \bigcap_{\sigma\in\Gamma}\sigma(V)\right)  =\bigcap_{\sigma
\in\Gamma}\tau\left(  \sigma(V)\right)  =\bigcap_{\sigma\in\Gamma}\tau
\sigma(V)=\bigcap_{\sigma\in\tau\Gamma}\sigma(V)\supseteq\bigcap_{\sigma
\in\Gamma}\sigma(V)\text{,}%
\]
which implies
\[
\dim\tau\left(  \bigcap_{\sigma\in\Gamma}\sigma(V)\right)  =\dim
\bigcap_{\sigma\in\Gamma}\sigma(V)\implies\tau\left(  \bigcap_{\sigma\in
\Gamma}\sigma(V)\right)  =\bigcap_{\sigma\in\Gamma}\sigma(V)
\]
and hence
\[
\tau\left(  \bigcap_{\sigma\in\Gamma}\sigma(V)\right)  =\bigcap_{\sigma
\in\Gamma}\sigma(V)\text{',}%
\]
which, as well as all of Section \ref{duality} and this section except this
part, is valid under the weaker assumption that $\Gamma$\emph{ is a
subsemigroup of }$\operatorname{Aut}(K)$. Therefore our results hold under
this weaker assumption, which is equivalent to the original assumption when
$\Gamma$ is finite.
\end{remark}

\end{document}